\newtheorem{theorem}{Theorem}[section]
\newtheorem{lemma}[theorem]{Lemma}
\theoremstyle{definition}
\theoremstyle{remark}
\newtheorem{remark}[theorem]{Remark}
\numberwithin{equation}{section}
\theoremstyle{plain}
\newtheorem{corollary}[theorem]{Corollary}
\begin{document}
\title {A Conformal Quasi Einstein\\ Characterization Of The Round Sphere}
\author{Ramesh Sharma}
\address{321 Maxcy Hall\\
Dept. of Math, University of New Haven\\
West Haven, CT 06516.}
\email{rsharma@newhaven.edu}
\urladdr{https://math.newhaven.edu/~rsharma}

\subjclass{53C25, 53C21}
\keywords{$m$-quasi Einstein closed manifold, Killing vector fiel, constant scalar curvature, conformal vector field, sphere}

\begin{abstract}
We extend the following result of Cochran ``A closed $m$-quasi Einstein manifold ($M,g,X$) with $m \ne -2$ has constant scalar curvature if and only if $X$ is Killing" covering the missing accidental case $m=-2$ and generalize it showing that $X$ is Killing if the integral of the Lie derivative of the scalar curvature along $X$ is non-positive. For a closed $m$-quasi Einstein manifold of dimension $n \ge 2$, if $X$ is conformal, then it is Killing; and in addition, if $M$ admits a non-Killing conformal vector field $V$, then it is globally isometric to a sphere and $V$ is gradient for $n > 2$. Finally, we derive an integral identity for a vector field on a closed Riemannian manifold, which provides a direct proof of the Bourguignon-Ezin  conservation identity.
\end{abstract}
\maketitle

\section{Introduction} By an $m$-quasi Einstein manifold, we mean a Riemannian manifold ($M,g$) that has a smooth vector field $X$, and satisfies the equation
\begin{equation}\label{1}
\frac{1}{2}\mathcal{L}_X g +Ric-\frac{1}{m}X^{*}\otimes X^{*}=\lambda g
\end{equation}
where $X^{*}$ denotes the 1-form dual to $X$, $\mathcal{L}_X$ denotes the Lie derivative operator along $X$, $Ric$ denotes the Ricci tensor of $g$, and $m \ne 0$ is a real number. The case $m=0$ may be interpreted as having $X=0$. The solution of (\ref{1}) is trivial when $X=0$ in which case $g$ becomes Einstein. Another case is the near horizon limit (Bahuaud-Gunasekaran-Kunduri-Woolgar \cite{BGKW2024}), which arises on vacuum stationary spacetimes that contain a so-called extreme black hole. For $m=2$, the solution of (\ref{1}) describes the geometry of a degenerate Killing horizon (black hole's event horizon) on which a Killing vector field that is timelike immediately outside the horizon becomes null. For $m=\pm \infty$, the equation (\ref{1}) reduces to a Ricci soliton. The case: $0<m< \infty$, has been studied considerably by some authors, e.g. Case-Shu-Wei \cite{C-S-W 2011} by taking $X$ as a gradient vector of a smooth function $f$ on $M$. For a positive integer $m$, ($M,g$) is $m$-quasi Einstein if and only if the warped product $M \times_{e^{-f/m}}F^{m}$ is Einstein, where $F^{m}$ is an $m$-dimensional Einstein manifold. We also note that the only closed 1-dimensional Riemannian manifold is $S^1$. The $g$-trace of equation (\ref{1}) is
\begin{equation}\label{2}
div X+R-\frac{1}{m}|X|^2=n\lambda.
\end{equation}

In \cite{Cochran2025}, Cochran proved the following result.\\

\noindent
\textbf{Theorem (Cochran)} \textit{Let ($M,g,X$) be a solution to the $m$-quasi Einstein equation (\ref{1}) such that ($M,g$) is closed ( i.e. compact and without boundary) and $m \ne -2$. Then ($M,g$) has constant scalar curvature if and only if $X$ is Killing}.\\

\noindent
So, the question arises whether the above mentioned result holds for the seemingly accidental and pathological case $m=-2$ as well. We show that this is indeed true and furthermore, obtain the following generalization of this result.

\begin{theorem} Let ($M,g,X$) be a solution to the $m$-quasi Einstein equation (\ref{1}) such that ($M,g$) is closed. If $X$ is Killing, then the scalar curvature $R$ is constant. If $\int_M (\mathcal{L}_X R) dv_g \le 0$ (in particular, $R$ is constant along $X$), then $X$ is Killing.
\end{theorem}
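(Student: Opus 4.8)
The plan is to prove the two implications separately; the converse rests on an integral identity that is uniform in $m \neq 0$, so that $m=-2$ is in no way exceptional. Throughout, $\lambda$ is a constant.

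For the first implication, suppose $X$ is Killing, so that $\mathcal{L}_X g=0$ and $\operatorname{div}X=0$, and \eqref{1} collapses to $\Ric=\lambda g+\tfrac1m X^{*}\otimes X^{*}$; its $g$-trace gives $|X|^2=m(R-n\lambda)$. Taking the divergence of this reduced equation and invoking the contracted second Bianchi identity $\operatorname{div}\Ric=\tfrac12\,dR$, together with the two standard consequences of $X$ being Killing, namely $\operatorname{div}X=0$ and $\nabla_X X=-\tfrac12\nabla|X|^2$, one obtains $\tfrac12\nabla R=\tfrac1m\nabla_X X=-\tfrac1{2m}\nabla|X|^2=-\tfrac12\nabla R$, hence $\nabla R=0$. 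No division by $m+2$ intervenes, which is precisely why $m=-2$ causes no trouble here.

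For the converse the engine is the identity
\[
\tfrac12\int_M|\mathcal{L}_X g|^2\,dv_g+\int_M(\operatorname{div}X)^2\,dv_g=2\int_M(\mathcal{L}_X R)\,dv_g ,
\]
valid for every closed solution of \eqref{1}. To establish it, I would first take the divergence of \eqref{1}, using the Ricci commutation identity to express $\operatorname{div}(\tfrac12\mathcal{L}_X g)$ through the rough Laplacian of $X$, the term $\nabla(\operatorname{div}X)$, and $\Ric(X)$, the contracted Bianchi identity for $\operatorname{div}\Ric$, and $\operatorname{div}(X^{*}\otimes X^{*})=(\operatorname{div}X)X^{*}+(\nabla_X X)^{*}$; pairing the resulting $1$-form with $X$ and integrating by parts over $M$ yields
\[
\int_M|\nabla X|^2+\int_M(\operatorname{div}X)^2-\int_M\Ric(X,X)+\int_M R\,\operatorname{div}X+\tfrac1m\int_M(\operatorname{div}X)|X|^2=0 .
\]
The term $\int_M\Ric(X,X)$ is then removed by the classical Bochner--Yano integral formula $\int_M\bigl[\Ric(X,X)+\tfrac12|\mathcal{L}_X g|^2-|\nabla X|^2-(\operatorname{div}X)^2\bigr]dv_g=0$ (itself obtained by integrating $\operatorname{div}\bigl(\nabla_X X-(\operatorname{div}X)X\bigr)$), which leaves $\tfrac12\int|\mathcal{L}_X g|^2+\int R\,\operatorname{div}X+\tfrac1m\int(\operatorname{div}X)|X|^2=0$; finally one substitutes $\tfrac1m|X|^2=\operatorname{div}X+R-n\lambda$ from the trace equation \eqref{2} into the last integrand, uses $\int_M\operatorname{div}X=0$, and rewrites $\int_M R\,\operatorname{div}X=-\int_M(\mathcal{L}_X R)\,dv_g$ to reach the displayed identity.

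Granting the identity, the theorem follows at once: if $\int_M(\mathcal{L}_X R)\,dv_g\le 0$ then the right-hand side is nonpositive while the left-hand side is nonnegative, so both sides vanish, forcing $\mathcal{L}_X g\equiv 0$, i.e.\ $X$ is Killing; the stated hypothesis that $R$ be constant along $X$ is the special case $\mathcal{L}_X R\equiv 0$. I expect the main obstacle to be bookkeeping rather than ideas: getting the curvature-commutator and integration-by-parts signs correct so that the $\tfrac1m$-terms combine as claimed, and checking that after applying the Bochner--Yano formula and the trace equation everything except the two nonnegative squares cancels. The conceptual content --- that this particular threefold combination (the divergence of \eqref{1}, the Bochner--Yano identity, and the trace of \eqref{1}) collapses to a manifestly signed identity --- is what makes the argument go through uniformly in $m$.
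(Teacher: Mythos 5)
Your proposal is correct, and both halves take routes that genuinely differ from the paper's. For the first implication the paper invokes two rather heavy identities from Bahuaud--Gunasekaran--Kunduri--Woolgar, combines them to obtain $\Delta|X|^2+\nabla_X|X|^2=0$, and then applies Bochner's lemma on the closed manifold; you instead take the divergence of the reduced equation $\Ric=\lambda g+\tfrac1m X^{*}\otimes X^{*}$ and play the contracted Bianchi identity off against the trace relation $\nabla|X|^2=m\nabla R$ and the Killing identity $\nabla_X X=-\tfrac12\nabla|X|^2$ to reach $\tfrac12\nabla R=-\tfrac12\nabla R$ pointwise. This is shorter, self-contained, makes no use of compactness, and yields the constancy of $|X|$ directly (a fact the paper only extracts afterwards in a remark). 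For the converse you land on exactly the paper's key identity --- the paper's version reads $\int_M[|\mathcal{L}_X g|^2+2(\operatorname{div}X)^2]\,dv_g=4\int_M(\mathcal{L}_X R)\,dv_g$, which is twice yours --- and you share with the paper the one essential external input, Yano's Bochner-type integral formula for $\int_M\Ric(X,X)\,dv_g$. But you generate the companion relation by taking the divergence of the quasi-Einstein equation and pairing with $X$ (essentially the route the paper follows later, in its Theorem 2.1), whereas the paper's own proof of this theorem contracts the equation with $X\otimes X$ and then brings in an auxiliary identity of Bahuaud et al.\ together with both parts of Lemma 1.3. Your version needs nothing beyond the equation, its trace, and the classical Yano formula; I checked the integrations by parts and the $\tfrac1m$-cancellations you flag as potential bookkeeping hazards, and they close up with the signs exactly as you state them.
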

Here $dv_g$ denotes the volume element of ($M,g$). An immediate consequence of this result is the following corollary.
\begin{corollary}
For a closed $m$-quasi Einstein manifold, if the scalar curvature $R$ satisfies $\int_M (\mathcal{L}_X R) dv_g \le 0$, then $R$ is constant. 
\end{corollary}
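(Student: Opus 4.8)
The plan is to obtain the corollary as an immediate consequence of Theorem 1.1, by chaining its two implications. First I would invoke the second assertion of Theorem 1.1: since $(M,g,X)$ is a solution to the $m$-quasi Einstein equation (\ref{1}) with $(M,g)$ closed and, by hypothesis, $\int_M (\mathcal{L}_X R)\, dv_g \le 0$, the theorem forces $X$ to be a Killing vector field. (The parenthetical case ``$R$ constant along $X$'' is subsumed here, since then $\mathcal{L}_X R \equiv 0$ and the integral vanishes trivially.)

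Having established that $X$ is Killing, I would then apply the first assertion of Theorem 1.1 in the forward direction: on a closed $m$-quasi Einstein manifold, $X$ Killing implies that the scalar curvature $R$ is constant. Combining the two steps yields the claim. Note that no restriction on $m$ (such as $m \ne -2$) is invoked at any point, which is precisely the gain this paper makes over Cochran's original statement.

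I do not anticipate any genuine obstacle here beyond Theorem 1.1 itself; the corollary is essentially a logical repackaging, so the only care needed is to check that the hypotheses of Theorem 1.1 are met verbatim. If one preferred a more self-contained derivation, one could instead reprove the underlying integral identity directly — pairing the contracted second Bianchi identity with $X$ and using the trace equation (\ref{2}) on the closed manifold to relate $\int_M (\mathcal{L}_X R)\, dv_g$ to a nonnegative term measuring the failure of $X$ to be Killing — but this merely re-exposes the mechanism already internal to Theorem 1.1, so citing the theorem is the cleaner route.
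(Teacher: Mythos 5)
Your argument is exactly the paper's: the corollary is stated there as an immediate consequence of Theorem 1.1, obtained by first using the integral hypothesis to conclude $X$ is Killing and then using the forward implication to get $R$ constant. The proposal is correct and matches the paper's route.
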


The proof of Theorem 1.1 needs the following lemma.
\begin{lemma} For any vector field $X$ on a closed Riemannian manifold ($M,g$), $\int_M (\nabla_X |X|^2) dv_g=-\int_M |X|^2(div X)dv_g$ and if ($M,g$) is also $m$-quasi Einstein, then $\int_M (|X|^2 div X)dv_g=m\int_M [(div X)^2-\mathcal{L}_X R] dv_g$.
\end{lemma}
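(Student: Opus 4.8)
The plan is to prove both identities by the divergence theorem on the closed manifold $(M,g)$, exploiting the product rule for the divergence of a function times a vector field.

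For the first identity, I would start from the pointwise formula $\operatorname{div}(|X|^2 X) = \nabla_X |X|^2 + |X|^2\operatorname{div} X$, which follows from $\operatorname{div}(fX) = \nabla_X f + f\operatorname{div} X$ applied with $f = |X|^2$. Integrating over $M$ and using $\int_M \operatorname{div}(|X|^2 X)\, dv_g = 0$ (since $M$ is closed) immediately yields $\int_M (\nabla_X |X|^2)\, dv_g = -\int_M |X|^2(\operatorname{div} X)\, dv_g$.

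For the second identity, I would use the trace equation \eqref{2}, namely $\operatorname{div} X + R - \tfrac{1}{m}|X|^2 = n\lambda$, to solve for $|X|^2 = m(\operatorname{div} X + R - n\lambda)$. Substituting,
\[
\int_M |X|^2(\operatorname{div} X)\, dv_g = m\int_M \bigl[(\operatorname{div} X)^2 + R\,\operatorname{div} X - n\lambda\,\operatorname{div} X\bigr]\, dv_g .
\]
Since $\lambda$ is constant and $\int_M (\operatorname{div} X)\, dv_g = 0$, the last term drops out. For the middle term I would again apply the product rule, $\operatorname{div}(RX) = \nabla_X R + R\,\operatorname{div} X = \mathcal{L}_X R + R\,\operatorname{div} X$ (using that $\mathcal{L}_X R = \nabla_X R$ for the scalar $R$), and integrate to get $\int_M R\,\operatorname{div} X\, dv_g = -\int_M (\mathcal{L}_X R)\, dv_g$. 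Combining these gives $\int_M |X|^2(\operatorname{div} X)\, dv_g = m\int_M\bigl[(\operatorname{div} X)^2 - \mathcal{L}_X R\bigr]\, dv_g$, as claimed.

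There is no serious obstacle here; both parts are routine integrations by parts. The only point requiring a word of care is the use of \eqref{2}, which is simply the $g$-trace of \eqref{1}, and the standing assumption that $\lambda$ is a constant, which is what allows the term $n\lambda\int_M \operatorname{div} X\, dv_g$ to vanish.
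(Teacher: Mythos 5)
Your proof is correct and follows essentially the same route as the paper: the first identity by integrating $\operatorname{div}(|X|^2X)=\nabla_X|X|^2+|X|^2\operatorname{div}X$, and the second by multiplying the trace equation \eqref{2} by $\operatorname{div}X$ (your substitution for $|X|^2$ is just a rearrangement of this) and integrating by parts. You merely spell out the steps the paper leaves implicit, such as $\int_M R\,\operatorname{div}X\,dv_g=-\int_M(\mathcal{L}_XR)\,dv_g$.
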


The first part of the lemma follows straightaway by integrating the identity: $div(|X|^2 X)=\nabla_X |X|^2 +|X|^2 div X$ over the closed $M$ and using divergence theorem. For the second part, we multiply (\ref{2}) with $div X$, and then integrate the resulting equation over $M$.\\

Now we turn our attention to proving Theorem 1.1.
\begin{proof}
First, let $X$ be Killing and hence $div X=0$.  We recall the following equations (equations (2.13) and (2.17) in \cite{BGKW2024}) for an $m$-quasi Einstein manifold:
\begin{eqnarray}\label{3}
0&=&\Delta |X|^2 -\frac{1}{2}(|S|^2 +|A|^2)-\nabla_X |X|^2 \nonumber\\
&+&2|X|^2(\lambda+\frac{1}{m}|X|^2)-\frac{4}{m}|X|^2div X.
\end{eqnarray}

\begin{eqnarray}\label{4}
&-&\Delta(div X)+(\frac{4}{m}+1)\nabla_X (div X)-2\lambda divX-\frac{1}{m}|X|^2divX \nonumber\\
&+&\frac{2}{m}(divX)^2=(\frac{1}{m}-\frac{1}{4})\Delta|X|^2-(\frac{3}{8}+\frac{1}{2m})|S|^2+(\frac{1}{2m}+\frac{1}{8})|A|^2\nonumber\\
&+& (\frac{3}{m}+\frac{1}{4})\nabla_X |X|^2-2(\frac{1}{m}+\frac{1}{4})|X|^2(\lambda+\frac{1}{m}|X|^2)
\end{eqnarray}
where $S$ and $A$ are tensors with components $S_{ij}=\nabla_i X_j +\nabla_j X_i=(\mathcal{L}_X g)_{ij}$ and $A_{ij}=\nabla_i X_j -\nabla_j X_i$, satisfying the properties: $|\nabla X|^2 =\frac{1}{4}(|S|^2+|A|^2)$, $(\nabla^i X^j)(\nabla_j X_i)=\frac{1}{4}(|S|^2-|A|^2)$. The assumption $S=0$ (which implies $div X=0$) reduces equations (\ref{3}) and (\ref{4}) to

\begin{equation}\label{5}
0=\Delta|X|^2-\frac{1}{2}|A|^2-\nabla_X |X|^2 +2|X|^2(\lambda-\frac{1}{2}|X|^2).
\end{equation}

\begin{eqnarray}\label{6}
&0&=(\frac{1}{m}-\frac{1}{4})\Delta |X|^2+(\frac{1}{2m}+\frac{1}{8}) |A|^2 +(\frac{3}{m}+\frac{1}{4})\nabla_X |X|^2 \nonumber\\
&-& 2(\frac{1}{m}+\frac{1}{4})|X|^2 (\lambda +\frac{1}{m}|X|^2).
\end{eqnarray}
Multiplying equation (\ref{5}) with $\frac{1}{m}+\frac{1}{4}$ and adding the resulting equation to equation (\ref{6}) gives

\begin{equation}\label{7}
0=\Delta|X|^2+\nabla_X |X|^2 .
\end{equation}
Now, using $div X=0$ in the trace equation (\ref{2}), and then differentiating along $X$ we get $\nabla_X |X|^2 =m \nabla_X R$. Using this in (\ref{7}) shows that
\begin{equation}\label{8}
\Delta |X|^2 +m\nabla_X R=0.
\end{equation}
Here we note that $X$ is Killing and hence preserves the scalar curvature, i.e, $\mathcal{L}_X R=0$, i.e. $\nabla_X R=0$. Use of this in (\ref{8}) gives $\Delta |X|^2 =0$. Hence, as $M$ is closed, by Bochner's lemma we conclude that $|X|$ is constant, and hence from (\ref{2}), $R$ is constant.\\

For the second part, we proceed as follows. The inner product of the fundamental equation (\ref{1}) with $X \otimes X$ and use of Lemma 1.3 gives
\begin{equation*}
\nabla_X |X|^2 +2Ric(X,X)-\frac{2}{m}|X|^4 -2\lambda |X|^2 =0.
\end{equation*}
Integrating it over the closed $M$ and using Lemma 1.3 we have
\begin{equation*}
\int_M [(divX) |X|^2) -2Ric(X,X)+2(\frac{|X|^4}{m}+\lambda |X|^2)]dv_g =0.
\end{equation*}
At this point, we recall the following integral formula for a smooth vector field $X$ on a closed Riemannian manifold ($M,g$) [equation (1.11), p. 41 of \cite{Yano1970}]:

\begin{equation*}
\int_M [Ric(X,X)+\frac{1}{2}|\mathcal{L}_X g|^2-|\nabla X|^2 -(div X)^2]dv_g =0.
\end{equation*}
Eliminating $\int_M Ric(X,X)dv_g$ between the two preceding equations provides
\begin{equation}\label{9}
\int_M [(divX) |X|^2) +2(\frac{|X|^4}{m}+\lambda |X|^2)+|\mathcal{L}_X g|^2-2|\nabla X|^2-2(div X)^2]dv_g =0,
\end{equation}
Using the second part of Lemma 1.3, in (\ref{9}) we find that
\begin{eqnarray}\label{10}
\int_M[(2-m)(div X)^2&+& m\mathcal{L}_X R -\frac{2}{m}(|X|^4 +m\lambda|X|^2)\nonumber\\
&-&|\mathcal{L}_X g|^2 +2|\nabla X|^2]dv_g=0.
\end{eqnarray}
Next, we recall the following equation for an $m$-quasi Einstein metric (Bahuaud, Gunasekaran, Kunduri and Woolgar \cite{BGKW2024}):
\begin{eqnarray*}
0&=&\Delta |X|^2-\frac{1}{2}(|S|^2+|A|^2)-\nabla_X |X|^2\nonumber\\
&+&2|X|^2(\lambda +\frac{1}{m}|X|^2)-\frac{4}{m}|X|^2 div X.
\end{eqnarray*}
Integrating it, adding the resulting equation to (\ref{10}) and using the property \cite{BGKW2024}: $|\nabla X|^2 =\frac{1}{4}(|S|^2+|A|^2)$ along with Lemma 1.3 we obtain 
\begin{equation*}
\int_M [|\mathcal{L}_X g|^2 + 2(div X)^2]dv_g=4\int_M (\mathcal{L}_X R)dv_g.
\end{equation*}
Hence, if $\int_M (\mathcal{L}_X R)dv_g \le 0$, then the above integral equality implies that $X$ is Killing. This completes the proof.
\end{proof}
\begin{remark} The integral $\int_M (\mathcal{L}_X R)dv_g$ involved in Theorem 1.1 plays an important role in conformal geometry, and vanishes for a conformal vector field $X$ on a closed Riemannian manifold (Bourgignon-Ezin identity \cite{Bourgignon-Ezin 1987}).
\end{remark}
\begin{remark} First, we noticed during the proof of Theorem 1.1 that $|X|$ is constant on $M$. So, if $X$ is non-zero then it is nowhere zero and hence $M$ has its Euler characteristic equal to zero. Secondly, it was pointed out in \cite{Cochran2025} that the Ricci tensor of a closed $m$-quasi manifold ($M,g,X$) with constant scalar curvature (hence $X$ Killing) need not be parallel. Here, we show that the Ricci tensor of such a manifold is cyclically parallel as follows. We first observe that equation (\ref{1}) becomes $R_{jk} =\lambda g_{jk}+\frac{1}{m}X_j X_k$, when $X$ is Killing. As $|X|$ is constant, we have $g(\nabla_Y X,X)=0$ for any $Y \in \mathfrak{X}(M)$, and since $X$ is Kiiling, we get $g(\nabla_X X,Y)=0$ which implies $\nabla_X X=0$. Thus $Ric$ is parallel along the geodesics determined by $X$. On the other hand, using the aforementioned expression for $R_{ij}$, we compute $\nabla_i R_{jk}= \frac{1}{m}[(\nabla_i X_j)X_k+X_j \nabla_i X_k]$. Permuting $i,j,k$ cyclically twice, adding the resulting two equations to the foregoing equation we find that $\nabla_i R_{jk}+\nabla_J R_{ki}+\nabla_k R_{ij}=0$, i.e $Ric$ is cyclically parallel. Riemannian metrics with cyclically parallel Ricci tensor have constant scalar curvature and were studied by Gray \cite{Gray 1978}) who constructed such metrics on the unit sphere $S^3$ with Ricci tensors cyclically parallel, but not parallel. Another example of a manifold whose Ricci tensor is cyclically parallel but not parallel, given in \cite{Gray 1978}, is $O(4)/O(2)$ with a bi-invariant metric.
\end{remark}

In the rest of this paper, for a closed $m$-quasi Einstein manifold of dimension $n \ge 2$, if $X$ is conformal, then we show that it is Killing; and in addition, if $M$ admits a non-Killing conformal vector field $V$, then it is globally isometric to a sphere and $V$ is necessarily gradient (up to the addition of a Killing vector field). This is the main result of this paper. Along the way, an integral formula is derived, giving a direct proof of the result `` $div X=0$ on the closed $m$-quasi Einstein ($M,g,X$) implies that $X$ is Killing" proved by Cochran  for any $m$ using a special construction. Finally, we provide a direct derivation of the Bourguignon-Ezin  conservation identity \cite{Bourgignon-Ezin 1987}: $\int_M (\mathcal{L}_V R) dv_g = 0$, for a conformal vector field $V$ on a closed Riemannian manifold $M^n$ ($n>2$), and obtain a generalization of this identity for any smooth vector field on a closed Riemannian manifold for $n>2$.

\section{$m$-quasi Einstein Theoretic Characterization Of A sphere}
We first consider the case when $X$ is conformal on the closed $m$-quasi Einstein manifold and prove the following result.
\begin{theorem} Let ($M,g,X$) be a solution to the $m$-quasi Einstein equation such that ($M,g$) is closed and of dimension $n \ge 2$. If $X$ is conformal, then it is Killing.
\end{theorem}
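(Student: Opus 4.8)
The plan is to exploit the conformal condition together with Theorem 1.1 (or rather its integral identity $\int_M [|\mathcal{L}_X g|^2 + 2(\operatorname{div}X)^2]dv_g = 4\int_M (\mathcal{L}_X R)dv_g$) and the Bourguignon–Ezin identity recalled in Remark 1.6. If $X$ is conformal, then $\mathcal{L}_X g = \tfrac{2}{n}(\operatorname{div}X)g$, so $|\mathcal{L}_X g|^2 = \tfrac{4}{n}(\operatorname{div}X)^2$. Hence the left-hand side of the displayed integral identity becomes $\int_M (\tfrac{4}{n}+2)(\operatorname{div}X)^2\,dv_g$, a nonnegative multiple of $\int_M (\operatorname{div}X)^2\,dv_g$. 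On the other hand, for $n > 2$ the Bourguignon–Ezin identity gives $\int_M (\mathcal{L}_X R)\,dv_g = 0$, so the right-hand side vanishes and we conclude $\operatorname{div}X = 0$, i.e.\ $X$ is Killing.

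The remaining issue is the low-dimensional cases. For $n = 2$ every vector field satisfies a conformal-type relation and the Bourguignon–Ezin identity need not hold in the same form, so I would handle $n = 2$ separately: on a closed surface the $m$-quasi Einstein equation with $X$ conformal can be combined with the fact that $\operatorname{Ric} = \tfrac{R}{2}g$ in dimension two, turning \eqref{1} into $\tfrac{1}{2}\mathcal{L}_X g - \tfrac{1}{m}X^*\otimes X^* = (\lambda - \tfrac{R}{2})g$; pairing this with the conformal relation $\tfrac12\mathcal{L}_X g = \tfrac{1}{2}(\operatorname{div}X)g$ forces $\tfrac{1}{m}X^*\otimes X^*$ to be pure trace, which (since $X^*\otimes X^*$ has rank at most one) is possible only where $X = 0$; continuity then gives $X \equiv 0$ unless the pointwise coefficient works out, and a short argument pins down $\operatorname{div}X = 0$. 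Alternatively, and more cleanly, one can invoke the first part of Theorem 1.1 in reverse: I would show $R$ is forced to be constant along $X$ (using that a conformal field on a closed surface still satisfies $\int_M \mathcal{L}_X R\,dv_g = 0$ after an integration by parts specific to $n=2$), and then apply the ``$\int_M \mathcal{L}_X R\,dv_g \le 0 \Rightarrow X$ Killing'' half of Theorem 1.1.

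I expect the main obstacle to be precisely this $n = 2$ endpoint, because the Bourguignon–Ezin identity in its standard form requires $n > 2$, and the conformal Killing operator is not elliptic in the same way on surfaces. A secondary subtlety is making sure the algebraic manipulation $|\mathcal{L}_X g|^2 = \tfrac{4}{n}(\operatorname{div}X)^2$ is inserted into the correct integral identity from the proof of Theorem 1.1 with the right constants, and that the identity genuinely holds for all $m$ (including $m = -2$, which is the whole point of the strengthened Theorem 1.1). Once $\operatorname{div}X = 0$ is established for every $n \ge 2$, the conclusion that $X$ is Killing is immediate, and no further geometric input is needed for this particular theorem; the isometry-with-the-sphere conclusion is a separate, later result built on top of this one via Obata-type rigidity.
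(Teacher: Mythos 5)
Your argument is correct, and it is genuinely more economical than the paper's. You reuse the identity $\int_M\bigl[|\mathcal{L}_X g|^2+2(\operatorname{div}X)^2\bigr]dv_g=4\int_M(\mathcal{L}_X R)\,dv_g$ already established in the proof of Theorem 1.1 (valid for every $m\neq 0$, with no division by $m+2$ anywhere), so the theorem reduces to: conformal $\Rightarrow$ $\int_M(\mathcal{L}_X R)\,dv_g=0$ by Bourguignon--Ezin $\Rightarrow$ $X$ Killing by the second half of Theorem 1.1; you do not even need the pointwise substitution $|\mathcal{L}_X g|^2=\tfrac{4}{n}(\operatorname{div}X)^2$, although it is correct. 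The paper instead rederives everything from scratch: it introduces the Yano operator $\square$, obtains the new integral identity (2.4) via Yano's formula (2.3), and only then invokes Lemma 1.3 and Bourguignon--Ezin. What the longer route buys is the identity (2.4) itself, which the author wants independently in order to recover (Remark 2.2) the Bahuaud--Gunasekaran--Kunduri--Woolgar/Cochran result that $\operatorname{div}X=0$ alone forces $X$ to be Killing; your route does not yield that byproduct. Two smaller points. First, your worry about $n=2$ is overcautious relative to the paper: the paper asserts and uses the Bourguignon--Ezin identity for all closed manifolds of dimension $\ge 2$ (in dimension two it is the Kazdan--Warner obstruction), so no separate case is needed. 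Second, your fallback two-dimensional argument nevertheless works and can be stated without hedging: in dimension two $\operatorname{Ric}=\tfrac{R}{2}g$, so conformality of $X$ forces $\tfrac{1}{m}X^{*}\otimes X^{*}=c\,g$ pointwise for some function $c$; since $c\,g$ has rank $0$ or $2$ while $X^{*}\otimes X^{*}$ has rank at most $1$, both sides vanish identically and $X\equiv 0$ --- there is no residual case where ``the pointwise coefficient works out,'' and no further argument about $\operatorname{div}X$ is required.
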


\begin{proof}
The $m$-quasi Einstein equation (\ref{1}) and its trace (\ref{2}) can be written in the component form as
\begin{equation*}
\nabla_j X^i +\nabla^i X_j +2R^i _j-\frac{2}{m}X^i X_j=2\lambda \delta ^i _j ,
\end{equation*}
\begin{equation*}
\nabla_i X^i +R-\frac{1}{m}|X|^2=n\lambda .
\end{equation*}
Taking their covariant derivatives and using the twice contracted Bianchi's second identity: $\nabla_i R^i _j=\frac{1}{2}\nabla_j R$, we have
\begin{equation*}
\nabla_i \nabla_j X^i +\nabla_i \nabla^i X_j+\nabla_j R-\frac{2}{m}[(div X)X_j +(\nabla_X X)_j]=0,
\end{equation*}
\begin{equation*}
\nabla_j \nabla_i X^i +\nabla_j R-\frac{1}{m}\nabla_j |X|^2=0.
\end{equation*}
Now subtracting the second equation from the first, provides
\begin{equation}\label{11}
-\square X=\frac{1}{m}[2(div X)X+2\nabla_X X- \nabla|X|^2]
\end{equation}
where the Yano operator $\square: \mathfrak{X}(M) \rightarrow \mathfrak{X}(M)$ is defined by (p. 40 in \cite{Yano 1970})
\begin{equation*}
(\square X)^i =-(\nabla^j \nabla_j X^i +R^i _j X^j).
\end{equation*}
Next, taking the inner product of (\ref{11}) with $X$, noting that $\nabla_X |X|^2=2g(\nabla_X X,X)$ and subsequently integrating over the closed $M$ yields that
\begin{equation}\label{12}
-\int_M g(\square X,X)dv_g=\frac{2}{m}\int_M (div X)|X|^2 dv_g.
\end{equation}
which holds for any closed $m$-quasi Einstein manifold. At this point, we recall the following integral formula for a vector field on a closed Riemannian manifold ($M,g$) [equation (1.14), p. 41 in \cite{Yano 1970}]:
\begin{equation}\label{13}
\int_M[g(\square X-\frac{n-2}{n}\nabla(div X),X)-\frac{1}{2}|\mathcal{L}_X g-\frac{2div X}{n}g|^2]dv_g=0.
\end{equation}
The use of (\ref{12}) and the identity: $div((div X)X)=\nabla_X  div X+(div X)^2$ in equation (\ref{13}) provides the formula:
\begin{equation}\label{14}
\int_M [\frac{2}{m}(div X)|X|^2 -\frac{n-2}{n}(div X)^2+\frac{1}{2}|\mathcal{L}_X g-\frac{2div X}{n}g|^2]dv_g=0
\end{equation}
for a closed $m$-quasi Einstein manifold. Using the second part of Lemma 1.3 in equation (\ref{14}) we obtain
\begin{equation*}
\int_M [\frac{n+2}{2}(div X)^2 -2\mathcal{L}_X R +\frac{1}{2}|\mathcal{L}_X g-\frac{2div X}{n}g|^2]dv_g =0.
\end{equation*}
So, if $X$ is conformal, then $\mathcal{L}_X g=\frac{2div X}{n}g$. In the above equation, we use Bourguignon-Ezin  conservation identity \cite{Bourgignon-Ezin 1987}: $\int_M (\mathcal{L}_X R) dv_g = 0$, for a conformal vector field $X$ on a closed Riemannian manifold, and obtain $div X=0$. Consequently, as pointed out in the remark below, it follows that $X$ is Killing. This completes the proof.
\end{proof}

\begin{remark} As the integral equation (\ref{14}) holds for a closed $m$-quasi Einstein manifold, we immediately recover the following result: ``If $div X=0$ on a closed $m$-quasi manifold, then $X$ is Killing" proved in \cite{BGKW2024} for $m \ne -2$, and in \cite{Cochran2025} for any $m$.
\end{remark}

We noticed from the previous theorem that demanding $X$ for a closed $m$-quasi Einstein manifold, to be conformal, forces it to become Killing. Thus, one may consider another vector field (other than $X$) and check the impact on the closed $m$-quasi Einstein manifold. Motivated by this consideration, we establish the following characterization of a sphere, as the main result of this paper.

\begin{theorem}Let ($M,g,X$) be an $n$-dimensional ($n \ge 2$) closed $m$-quasi Einstein manifold such that $X$ is Killing. If it admits a non-Killing conformal vector field $V$, then it is globally isometric to a sphere. Also, in dimensions higher than 2, $V$ is gradient (up to the addition of a Killing vector field).
\end{theorem}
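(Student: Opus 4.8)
The plan is to reduce everything to a statement about a closed manifold of constant scalar curvature and then to invoke classical conformal rigidity. Since $X$ is assumed Killing, the first part of Theorem~1.1 gives that $R$ is constant; moreover, as recorded in the proof of Theorem~1.1 and in Remark~1.6, $|X|$ and $\lambda$ are constant and the Ricci tensor is $\Ric=\tfrac{}{}\lambda g+\tfrac1m X^{*}\otimes X^{*}$. Thus $(M,g)$ is a closed Riemannian manifold of constant scalar curvature carrying a non-Killing conformal vector field $V$.

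Next I would write $\mathcal{L}_V g=2\sigma g$ with $\sigma=\tfrac1n\operatorname{div}V$, noting $\sigma\not\equiv 0$ since $V$ is not Killing. Because $R$ is constant, $\mathcal{L}_V R=V(R)=0$, and the standard trace identity for a conformal field, $\mathcal{L}_V R=-2\sigma R-2(n-1)\Delta\sigma$ (with $\Delta=\operatorname{div}\operatorname{grad}$), forces $\Delta\sigma+\tfrac{R}{n-1}\sigma=0$. A nonzero constant $\sigma$ is impossible (a homothetic field on a closed manifold is Killing), so $\sigma$ is non-constant; multiplying the last equation by $\sigma$ and integrating gives $\int_M|\nabla\sigma|^2\,dv_g=\tfrac{R}{n-1}\int_M\sigma^2\,dv_g>0$, whence $R>0$. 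Thus $\sigma$ is a non-constant eigenfunction of the Laplacian on $M$ with eigenvalue $-R/(n-1)$. At this point I would invoke the classical rigidity theorem (Obata, building on Lichnerowicz, Yano-Nagano and Lelong-Ferrand): a closed Riemannian manifold of constant scalar curvature admitting a non-Killing conformal vector field is globally isometric to a round sphere $S^n(r)$. For $n=2$ this also drops out of the classification of closed constant-curvature surfaces. This proves the first assertion.

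For the second assertion, assume $n>2$ and work on $S^n(r)$, where $\Ric=\tfrac{n-1}{r^2}g$ and $R=\tfrac{n(n-1)}{r^2}$. Substituting the Einstein condition into the conformal identity $\mathcal{L}_V\Ric=-(n-2)\Hess\sigma-(\Delta\sigma)g$ and taking the trace-free part gives $(n-2)\bigl(\Hess\sigma-\tfrac1n(\Delta\sigma)g\bigr)=0$, hence $\Hess\sigma=\tfrac1n(\Delta\sigma)g=-\tfrac{R}{n(n-1)}\sigma\,g$, which is Obata's equation. Therefore $\nabla\sigma$ is a closed conformal vector field with potential $-\tfrac{R}{n(n-1)}\sigma$; putting $f:=-\tfrac{n(n-1)}{R}\sigma$, the gradient field $\nabla f$ is conformal with potential $\sigma$, so $Z:=V-\nabla f$ is conformal with vanishing potential, i.e.\ Killing. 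Hence $V=\nabla f+Z$ with $f$ smooth and $Z$ Killing, the asserted decomposition. (Equivalently, one may quote that on $S^n$, $n\ge3$, the conformal algebra is $\mathfrak{so}(n+1)$ together with the gradients of the restrictions of linear functions.)

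The reduction to constant scalar curvature is immediate from Theorem~1.1, and the Killing-plus-gradient split on the sphere is routine; the genuine content---and the step I expect to be the main obstacle---is the identification of the sphere. An $m$-quasi Einstein metric is \emph{not} Einstein in general, its Ricci tensor carrying the rank-one term $\tfrac1m X^{*}\otimes X^{*}$, so the quick trace-free Hessian argument of the Einstein (Yano-Nagano) case is unavailable, and the Bochner identity for $\sigma$ together with the explicit Ricci form does not by itself force Obata's equation; one therefore needs the classical constant-scalar-curvature rigidity theorem (or a more elaborate integral-formula argument in its place). A byproduct is that, once the round sphere is reached, the rank-one term must vanish, so in fact $X\equiv 0$ under the hypotheses of the theorem. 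The remaining points---the case $n=2$ for the first assertion and the harmless non-uniqueness in $V=\nabla f+Z$---are minor.
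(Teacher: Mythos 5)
Your argument is correct and follows the same skeleton as the paper's proof (reduce to constant scalar curvature via Theorem 1.1, derive $\Delta\sigma=-\tfrac{R}{n-1}\sigma$ and hence $R>0$, identify the sphere, then conclude $M$ Einstein, $X=0$, and the Obata equation for $\sigma$ to split $V$ as gradient plus Killing), but it diverges at precisely the step you flag as the crux. You invoke the Obata--Ferrand theorem that a closed manifold of constant scalar curvature admitting a non-Killing conformal vector field is already a round sphere; that theorem is true, so your proof closes, but it is a deep result. The paper instead gets by with Lichnerowicz's older (1964) theorem, which requires constant scalar curvature \emph{and} constant $|\Ric|$, and supplies the second hypothesis from the quasi-Einstein structure: with $X$ Killing one has $\Ric=\lambda g+\frac{1}{m}X^{*}\otimes X^{*}$ and $|X|$ constant, hence $|\Ric|^2=\frac{1}{m^2}\bigl(|X|^4+nm^2\lambda^2+2\lambda m|X|^2\bigr)$ is constant. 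So the ``more elaborate integral-formula argument'' you anticipated as the alternative is essentially this one-line observation: the rank-one correction to the Ricci tensor, far from being an obstacle, is exactly what makes the weaker classical theorem applicable. Your explicit integration-by-parts proof that $R>0$ and your a posteriori remark that $X\equiv0$ both match the paper. The only shared blemish is that in dimension $2$ both you and the paper pass from ``constant positive curvature'' to ``sphere'' without explicitly excluding $\mathbb{RP}^2$, which must be ruled out by the presence of the non-Killing conformal field (or, again, by Obata's theorem).
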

\begin{proof} Let $V$ be a non-Killing conformal vector field on ($M,g$). Then
\begin{equation*}
\mathcal{L}_V g=2\sigma g
\end{equation*}
for a non-constant smooth function $\sigma$ on $M$. We recall the following integrability condition for the conformal vector field $V$ \cite{Yano 1970}:
\begin{equation}\label{15}
\mathcal{L}_V R=-2\sigma R-2(n-1)\Delta \sigma.
\end{equation}
First, we consider the case: $n=2$. As $X$ is Killing, Theorem 1.1 asserts that $R$ is constant. Hence (\ref{15}) reduces to $\Delta \sigma =-R \sigma$. Hence $R$ is positive. Therefore $M$ has positive constant curvature. Applying the following  result of Bishop-Goldberg \cite{B-G 1966} ``A compact Riemannian manifold of constant positive curvature is globally isometric to a round sphere, we conclude that ($M,g$) is globally isometric to a 2-sphere.\\

For case $n > 2$, as $X$ is Killing, by Theorem 1.1, $R$ is constant. Using this in the integral of (\ref{2}) shows that $|X|$ is constant. Thus equation (\ref{15}) reduces to $\Delta \sigma =-\frac{R}{n-1}\sigma$. This shows that $R >0$. As $X$ is Killing, the $m$-quasi Einstein equation (\ref{1}) reduces to the form:
\begin{equation}\label{16}
Ric=\frac{1}{m}X^{*} \otimes X^{*}+\lambda g.
\end{equation}
Using this we compute 
\begin{equation}\label{17}
|Ric|^2 =\frac{1}{m^2}[|X|^4 +nm^2 \lambda^2 +2\lambda m|X|^2].
\end{equation}
As $|X|$ is constant, the equation (\ref{17}) shows that $|Ric|$ is constant on $M$. Applying the following result of Lichnerowicz \cite{Lichnerowicz 1964}: ``If a closed Riemannian manifold ($M,g$) of dimension $>2$ with constant scalar curvature and constant norm of the Ricci tensor, admits an infinitesimal non-isometric conformal transformation, then ($M,g$) is globally isometric to a round sphere" we conclude that ($M,g$) is globally isometric to a round sphere. So, $M$ is Einstein and hence equation (\ref{16}) reduces to 
\begin{equation}\label{18}
(\frac{R}{n}-\lambda)g=\frac{1}{m}X^{*}X^{*}.
\end{equation}
Contracting it gives $R-n\lambda=\frac{1}{m}|X|^2$. Next, operating (\ref{18}) on the pair ($X,$X) provides $(\frac{R}{n}-\lambda)|X|^2=\frac{1}{m}|X|^4$. The preceding two equations imply that $X=0$. Now we recall the following integrability condition for a conformal vector field $V$ \cite{Yano 1970}:
\begin{equation*}
\mathcal{L}_V Ric=(2-n)\nabla \nabla \sigma -(\Delta \sigma)g.
\end{equation*}
Using this in the Einstein condition: $Ric =\frac{R}{n}g$,  combining it  with (\ref{15}) and taking into account that $n>2$, shows that $\nabla \nabla \sigma=\frac{R}{n(1-n)}\sigma g$, i.e. $\mathcal{L}_{\nabla \sigma}g=2\frac{R}{n(1-n)}\sigma g$. Combining this with the conformal equation $\mathcal{L}_V g=2\sigma g$ immediately yields that $\mathcal{L}_{V+\frac{n(n-1)}{R}\nabla \sigma}g=0$ which proves (as $R$ is a positive constant) that $V$ is gradient up to the addition of a Killing vector field, completing the proof. 
\end{proof}
\begin{remark} Theorem 2.3 is a generalization of the following important result of Yano-Nagano \cite{Yano-Nagano 1959} ``If a closed Riemannian manifold $M$ of dimension $n>2$ admits an infinitesimal non-isometric conformal transformation, then $M$ is globally isometric to a sphere" for which $X=0$.
\end{remark}

\section{A generalization of Bourguignon-Ezin  conservation identity} A conformal vector field $V$ on an $n$-dimensional Riemannian manifold ($M,g$) is defined by
\begin{equation*}
\mathcal{L}_V g=2\sigma g
\end{equation*}
for a smooth function $\sigma$ on $M$. The Bourguignon-Ezin  conservation identity: $\int_M (\mathcal{L}_V R) dvol_g = 0$ holds for a conformal vector field $V$ on a closed Riemannian manifold of dimension $\ge 2$. Its derivation for $n>2$ uses the integral of following integrability condition \cite{Yano 1970} for a conformal vector field $V$ :
\begin{equation*}
\mathcal{L}_V R=-2\sigma R+2(1-n)\Delta \sigma.
\end{equation*}
along with the identity: $div (R V)=\mathcal{L}_V R +Rdiv(V)$ and the trace $div(V)=n\sigma$ of the conformal equation. We present a direct derivation of the Bourguignon-Ezin  conservation identity (without using the above mentioned integrability condition) for $n > 2$ as follows. Suppose $T$ is a seond order symmetric tensor on $M$. Then we compute
\begin{eqnarray*}
&&<\mathcal{L}_V g, T>=(\nabla_i V_j +\nabla_j V_i)T^{ij} \nonumber\\
&=&2(\nabla_i V_j) T^{ij})=2[\nabla_i (X_j T^{ij})-X_j \nabla_i T^{ij}]
\end{eqnarray*}
which can be expressed as $<\mathcal{L}_V g, T>=2[div(i_V T)-(div T)X]$ that was first derived by Griffin in \cite{Griffin2021}. Integrating it over a closed Riemannian manifold ($M,g$) and replacing $T$ with the trace free part $\mathring{Ric}=Ric -\frac{R}{n}g$ of the Ricci tensor and using the twice contracted Bianchi second identity: $div(Ric)=\frac{1}{2}\nabla R$ provides the following generalization of Bourguignon-Ezin  conservation identity:
\begin{equation*}
\int_M [<\mathring{\mathcal{L}_V g},\mathring{Ric}>+\frac{n-2}{n}\mathcal{L}_V R]dv_g =0
\end{equation*}
for any smooth vector field $V$ (not necessarily conformal) on a closed Riemannian manifold, where $\mathring{\mathcal{L}_V g}$ denotes the trace free part of $\mathcal{L}_V g$. In particular, when $V$ is conformal, i.e. $\mathring{\mathcal{L}_V g}=0$, the above formula recovers the Bourguignon-Ezin identity for $n > 2$.

\section{Acknowledgement} I express my sincere gratitude to Alex Colling who pointed out some errors in the earlier version.\\

\noindent
\textbf{Competing Interests}: The author has no relevant financial or non-financial interests to disclose.\\

\noindent
\textbf{Data Availability}: Data sharing is not applicable to this article as no datasets were generated or analysed during the current study.\\

\bibliographystyle{alpha}

\begin{bibdiv}
\begin{biblist}

\bib{Bourgignon-Ezin 1987}{article}{
      author={Bourgignon, J.-P.},
      author={Ezin, J.-P.}
      title={Scalar curvature functions in a conformal class of metrics and conformal transformations},
      date={1987},
      journal={Trans. Amer. Math. Soc.},
      volume={301},
      pages={723-736},
      }

\bib{BGKW2024}{article}{
      author={Bahuaud, E.},
      author={Gunasekaran, S.},
      author={Kunduri, H.},
      author={Woolgar, E.}
      title={Rigidity of quasi-Einstein metrics: The incompressible case},
      date={2024},
      journal={Lett. Math. Phys.},
      volume={114},
      pages={8},
      }

\bib{B-G 1966}{article}{
      author={Bishop, R.L.},
      author={Goldberg, S.I.},
      title={A characterization of the Euclidean sphere},
      date={1966},
      journal={Bull. Amer. Math. Soc.},
      volume={72},
      pages={122-124},
      }

\bib{C-S-W 2011}{article}{
      author={Case, J.},
      author={Shu, Y-J.},
      author={Wei, G.},
      title={Rigidity of quasi-Einstein metrics},
      date={2011},
      journal={Diff. Geom. Appl.},
      volume={29},
      pages={93-100},
      }

\bib{Cochran2025}{article}{
      author={Cochran, E.},
      title={Killing fields on compact $m$-quasi Einstein manifolds},
      date={2025},
     journal={Proc. Amer. Math. Soc.},
     volume={153},
    pages={841-849},

}

\bib{Gray 1978}{article}{
      author={Gray, A.},
      title={Einstein like manifolds which are not Einstein},
      date={1978},
     journal={Geom. Dedicata},
     volume={7},
    pages={259-280},
}

\bib{Griffin 2021}{article}{
      author={Griffin, E.},
      title={Gradient ambient obstruction solitons on homogeneous manifolds},
      date={2021},
     journal={Ann. Glob. Anal. Geom.},
     volume={60},
    pages={469-499},
}

\bib{Lichnerowicz 1964}{article}{
      author={Lichnerowicz,A.},
     title={Sur les transformations conformes d'une vari\'et\'e riemannienn\'e compact\' e},
      date={1964},
      journal={Compt. Rend.},
      volume={259},
      pages={697-700},
      }

\bib{Yano1970}{book}{
      author={Yano, K.},
      title={Integral formulas in Riemannian geometry},
      date={1970},
    publisher={Marcel Dekker},
    
}

\bib{Yano1952}{article}{
      author={Yano, K.},
     title={On harmonic and Killing vector fields},
      date={1952},
      journal={Ann. Math.},
      volume={55},
      pages={38-45},
      }

\bib{Yano-Nagano 1959}{article}{
      author={Yano, K.},
      author={Nagano, T.},
     title={Einstein spaces admitting a one-parameter group of conformal transformations},
      date={1959},
      journal={Ann. Math.},
      volume={69},
      pages={451-461},
      }

\end{biblist}
\end{bibdiv}

\end{document}